\documentclass[a4paper,10pt]{amsart}
\usepackage[english]{babel}
\usepackage{amsmath,amssymb,amsthm,amscd}

\usepackage{graphicx}

\parskip=1ex
\textwidth=16.5cm
\hoffset=-1.7cm

\theoremstyle{plain}
\newtheorem{theorem}{Theorem}

\newtheorem{lemma}[theorem]{Lemma}

\theoremstyle{definition}

\newcommand{\N}{\ensuremath{\mathbb{N}}}

\newcommand{\R}{\ensuremath{\mathbb{R}}}

\newcommand{\ex}{\text{ext}}

\DeclareMathOperator{\e}{e}
\newcommand{\ext}{\operatorname{ext}}

\renewcommand{\leq}{\leqslant}
\renewcommand{\geq}{\geqslant}

\begin{document}
\title[Numerical index of absolute symmetric norms]{On the numerical index of absolute symmetric norms on the plane}
 \thanks{Research partially supported by projects PGC2018-093794-B-I00 (MCIU/AEI/FEDER, UE) and FQM-185 (Junta de Andaluc\'{i}a/FEDER, UE). The second author is also supported by the Ph.D. scholarship FPU18/03057 (MECD)}
 \subjclass[2010]{46B20,\ 47A12}
 \keywords{Numerical range, numerical radius, numerical index, absolute symmetric norm, $L_p$-spaces}
 \date{December 23rd, 2019}

\maketitle

\centerline{\textsc{\large Javier
		Mer\'{\i}}  \footnote{Corresponding
author. \emph{E-mail:} \texttt{jmeri@ugr.es}} \quad and \quad \textsc{\large Alicia Quero}}

\begin{center} Departamento de An\'{a}lisis Matem\'{a}tico \\ Facultad de
Ciencias \\ Universidad de Granada \\ 18071 Granada, SPAIN \\
\emph{E-mail addresses:} \texttt{jmeri@ugr.es}, \ \texttt{aliciaquero@ugr.es}
\end{center}

\thispagestyle{empty}

\begin{abstract}
We give a lower bound for the numerical index of two-dimensional real spaces with absolute and symmetric norm. This allows us to compute the numerical index of the two-dimensional real $L_p$-space for $3/2\leq p\leq 3$.
\end{abstract}

\maketitle

\section[\S 1. Introduction]{Introduction}
The numerical index of a Banach space is a constant relating the
norm and the numerical range of bounded linear operators on
the space. Let us recall the relevant definitions.
Given a Banach space $X$, we will write $X^*$ for its
topological dual and $\mathcal{L}(X)$ for the Banach algebra of
all (bounded linear) operators on $X$. For an operator $T\in
\mathcal{L}(X)$, its \emph{numerical range} is defined as
$$
V(T):=\{x^*(Tx) \colon x^*\in X^*,\ x\in X,\ \|x^*\|=\|x\|=x^*(x)=1 \},
$$
and its \emph{numerical radius} is
$$
v(T):=\sup\{|\lambda| \colon \lambda\in V(T) \}.
$$
Clearly, $v$ is a seminorm on $\mathcal{L}(X)$
satisfying $v(T)\leq \|T\|$ for every $T\in \mathcal{L}(X)$. The \emph{numerical index} of
$X$ is the constant given by
$$
n(X):=\inf\{v(T) \colon T\in \mathcal{L}(X),\ \|T\|=1\}
$$
or, equivalently, $n(X)$ is the greatest constant $k\geq 0$ satisfying
$k\,\|T\| \leq v(T)$ for every $T\in \mathcal{L}(X)$.
Classical references on numerical index are the paper
\cite{D-Mc-P-W} and the monographs by F.F.~Bonsall and J.~Duncan
\cite{B-D1,B-D2} from the seventies. There has been a deep development of this field of study with the contribution of several authors. The reader will find the
state of the art on the subject in the survey paper
\cite{KaMaPa} and references therein.

In the following we recall some results concerning the numerical index which
will be relevant to our discussion. It is clear that $0\leq
n(X)\leq 1$ for every Banach space $X$. In the real case, all values in $[0,1]$ are
possible for the numerical index. In the complex case, one has
$1/\e\leq n(X)\leq 1$ and all of these values are possible. Let us
also mention that $v(T^*)=v(T)$ for every
$T\in \mathcal{L}(X)$, where $T^*$ is the adjoint operator of $T$ (see
\cite[\S~9]{B-D1}), so it clearly follows that $n(X^*)\leqslant
n(X)$. Although the equality does not
always hold, when $X$ is a reflexive space, one clearly gets
$n(X)=n(X^*)$. There are some classical Banach spaces for
which the numerical index has been calculated. If $H$ is a Hilbert space of dimension
greater than one, then $n(H)=0$ in the real case and $n(H)=1/2$ in
the complex case. Besides, $n(L_1(\mu))=1$ and the same happens to all 
its isometric preduals. In particular, it follows that $n\bigl(C(K)\bigr)=1$
for every compact $K$. 

The problem of computing the numerical index of the
$L_p$-spaces has been latent since the beginning of the theory \cite{D-Mc-P-W}. In order to present the known results on this matter we need to fix some notation. For $1< p<\infty$, 
we write $\ell_p^m$ for the $m$-dimensional
$L_p$-space, $q=p/(p-1)$
for the conjugate exponent to $p$, and
$$
M_p:=\max_{t\in[0,1]} \frac{|t^{p-1}-t|}{1+t^p}=\max_{t\geq 1} \frac{|t^{p-1}-t|}{1+t^p},
$$
which is the numerical radius of the operator represented by the matrix $\begin{pmatrix}0 & 1 \\ -1 & 0 \end{pmatrix}$
defined on the real space $\ell_p^2$. This can be found in \cite[Lemma~2]{MarMer-LP}, where it is also observed that $M_q=M_p$.
Although it is known that
$\bigl\{n(\ell_p^{2})\ : \ 1< p < \infty\bigr\}=[0,1[$ in the
real case (see \cite[p.~488]{D-Mc-P-W}), the exact computation of $n(\ell_p^2)$
has not been achieved for $p\neq 2$, all the more of $n(\ell_p)$. However, some
results have been obtained on the numerical index of the
$L_p$-spaces \cite{Eddari,Eddari2,Eddari3,MarMer-LP,MMP-Israel}, we summarize them in the following list. 
\begin{itemize}
	\item[(a)] The sequence $\bigl(n(\ell_p^m)\bigr)_{m\in\N}$
	is decreasing.
	\item[(b)] $n\bigl(L_p(\mu)\bigr)=\inf \{n(\ell_p^m)\,:\,
	m\in\N\}$ for every measure $\mu$ such that
	$\dim\bigl(L_p(\mu)\bigr)=\infty$.
	\item[(c)] In the real case, $n(L_p[0,1])\geq \frac{M_p}{12}$.
	\item[(d)] In the real case, $\displaystyle
	\max\left\{\frac{1}{2^{1/p}},\
	\frac{1}{2^{1/q}}\right\}\,M_p\leq n(\ell_p^{2})\leq
	M_p$.
\end{itemize}
The presence of the numerical radius of the operator represented by the matrix $\begin{pmatrix}0 & 1 \\ -1 & 0 \end{pmatrix}$ in the value of the numerical index of $L_p$-spaces is not a coincidence. Although there are not too many examples of Banach spaces for which the numerical index has been computed, for those two-dimensional real spaces with absolute and symmetric norm whose numerical index is known, it coincides with the numerical radius of the mentioned operator. This happens, for instance, to a family of octagonal norms and to the spaces whose unit ball is a regular polygon, see \cite[Theorem~2 and Theorem~5]{MartinMeri-LAMA}. The aim of this paper is to show that the same happens for many absolute and symmetric norms on $\R^2$, this is the content of Theorem~\ref{thm:num-index-abs-norms}. We say that a norm $\|\cdot\|: \R^2 \longrightarrow \R$ is \emph{absolute} if $\|(1,0)\|=\|(0,1)\|=1$ and 
$$
\|(a,b)\|=\|(|a|,|b|)\|
$$
for every $a,b \in \R$, and that the norm is \emph{symmetric} if $\|(b,a)\|=\|(a,b)\|$ for every $a,b \in \R$. 
Some of the most important examples of absolute and symmetric norms are $\ell_p$-norms on $\R^2$. As a major consequence of Theorem~\ref{thm:num-index-abs-norms} we show that $n(\ell_p^2)=M_p$ for $3/2\leq p\leq 3$, which improves partially \cite[Theorem~1]{MarMer-LP} and throws some light to the long standing problem of computing the numerical index of $L_p$-spaces.

To finish the introduction, we recall some facts about
numerical radius and about optimization of linear functions on convex sets that will be useful in our arguments. Let $X$ be a Banach space, and suppose that 
$S\in \mathcal{L}(X)$ is an onto isometry. Then, for
every operator $T\in \mathcal{L}(X)$, it is easy to check that
$$
v(T)=v(\pm S^{-1}TS).
$$
This becomes particularly useful when $X$ is $\R^2$ endowed with an absolute and symmetric norm, as we can find a basis of the space of operators $\mathcal{L}(X)$ formed by onto isometries:
$$ 
I_1=\begin{pmatrix}
1 & 0\\
0 & 1
\end{pmatrix}, \qquad
I_2=\begin{pmatrix}
1 & 0\\
0 & -1
\end{pmatrix}, \qquad
I_3=\begin{pmatrix}
0 & 1\\
1 & 0
\end{pmatrix}, \qquad
I_4=\begin{pmatrix}
0 & 1\\
-1 & 0
\end{pmatrix}\,.
$$  

For a convex set $A$, $\ex(A)$ stands for the set of its
extreme points, that is, those points which are not the mid point of
any non-trivial segment contained in $A$. By Minkowski's Theorem (see \cite[Corollary~1.13]{Tuy} for
instance) a nonempty compact convex subset of
$\R^n$ is equal to the convex hull of its extreme points. Therefore, every linear function on a compact convex set attains its minimum (and its maximum) at an extreme point of the set. 

\section{The results}
We start with an easy lemma showing that, for two dimensional real spaces with absolute and symmetric norm, the elements in the numerical range of $I_4$ are smaller than those of $I_j$ for $j=1,2,3$. 

\begin{lemma}\label{lemma-c4}
	Let $X$ be $\R^2$ endowed with an absolute and symmetric norm. Then
	$$ |x^*(I_jx)|\geq |x^*(I_4x)| \qquad (j=1,2,3)$$
	for every $x\in S_X$ and $x^*\in S_{X^*}$ such that $x^*(x)=1$.
\end{lemma}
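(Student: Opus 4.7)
The plan is to parameterize $x=(a,b)\in S_X$ and identify $x^*\in S_{X^*}$ with $(c,d)\in\R^2$ via the standard pairing, so that $x^*(y_1,y_2)=cy_1+dy_2$ and the condition $x^*(x)=1$ reads $ac+bd=1$. A direct matrix computation then gives
\begin{align*}
x^*(I_1 x) &= ac+bd=1, & x^*(I_2 x) &= ac-bd,\\
x^*(I_3 x) &= ad+bc, & x^*(I_4 x) &= bc-ad.
\end{align*}
The case $j=1$ is immediate: the absolute and symmetric hypotheses give $\|I_4 x\|=\|(b,-a)\|=\|(|b|,|a|)\|=\|(|a|,|b|)\|=\|(a,b)\|=1$, whence $|x^*(I_4 x)|\leq\|x^*\|\,\|I_4 x\|=1=|x^*(I_1 x)|$. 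For $j=2$ and $j=3$, I would pass to squares and use the algebraic identities
$$
|x^*(I_3 x)|^2-|x^*(I_4 x)|^2 = 4abcd, \qquad |x^*(I_2 x)|^2-|x^*(I_4 x)|^2 = (a^2-b^2)(c^2-d^2),
$$
thereby reducing the task to proving $abcd\geq 0$ and $(a^2-b^2)(c^2-d^2)\geq 0$.

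For both of these I would invoke the standard observation that the dual of an absolute and symmetric norm on $\R^2$ is again absolute and symmetric, which one verifies directly from the formula $\|(c,d)\|_*=\sup\{ac+bd:\|(a,b)\|\leq 1\}$ by replacing $(a,b)$ with $(\operatorname{sgn}(c)|a|,\operatorname{sgn}(d)|b|)$ and using symmetry. Granted this, the vectors $(|a|,|b|)$ and $(|c|,|d|)$ are unit in $X$ and $X^*$ respectively, so the chain
$$
1 = ac+bd \leq |a||c|+|b||d| \leq \|(|a|,|b|)\|\,\|(|c|,|d|)\|_* = 1
$$
is a chain of equalities, which forces $ac\geq 0$ and $bd\geq 0$ and hence $abcd=(ac)(bd)\geq 0$. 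For the second inequality, symmetry yields $\|(|b|,|a|)\|=1$, so
$$
|b||c|+|a||d| \leq \|(|b|,|a|)\|\,\|(|c|,|d|)\|_* = 1 = |a||c|+|b||d|,
$$
which rearranges to $(|a|-|b|)(|c|-|d|)\geq 0$ and therefore $(a^2-b^2)(c^2-d^2)\geq 0$.

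The step I view as genuinely non-routine is not any single calculation but spotting the right pair of identities $4abcd$ and $(a^2-b^2)(c^2-d^2)$, together with the realization that both absolute and symmetric properties pass to the dual norm. Once the symmetries of the norm are reflected on the functional side, the two required polynomial inequalities drop out of the duality pairing by testing against $(|a|,|b|)$ and its swap $(|b|,|a|)$, and the rest is elementary algebra.
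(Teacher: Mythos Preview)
Your proof is correct and follows essentially the same route as the paper: both extract $ac\geq 0$ and $bd\geq 0$ from the equality chain $1=ac+bd\leq |a||c|+|b||d|\leq 1$, and both obtain $(|a|-|b|)(|c|-|d|)\geq 0$ by testing the duality pairing against a coordinate-swapped unit vector. The only cosmetic difference is that the paper swaps on the dual side and then finishes $j=2$ by a short case analysis ($|a|>|b|$, $|a|<|b|$, $|a|=|b|$), whereas your difference-of-squares identities $|x^*(I_3x)|^2-|x^*(I_4x)|^2=4abcd$ and $|x^*(I_2x)|^2-|x^*(I_4x)|^2=(a^2-b^2)(c^2-d^2)$ handle all cases at once.
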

\begin{proof}
	Fixed $x=(a,b)\in S_X$ and $x^*=(\alpha,\beta)\in S_{X^*}$ with $x^*(x)=\alpha a +\beta b=1$, it is obvious that $1=|x^*(I_1x)|\geq|x^*(I_4x)|$. To prove $|x^*(I_3x)|\geq|x^*(I_4x)|$ observe that	
	\begin{align}\label{eq:duality-equality}
	1=\alpha a +\beta b&\leq |\alpha|\,|a|+|\beta|\,|b|\leq \|(|\alpha|,|\beta|)\| \, \|(|a|,|b|)\|=\|x^*\| \, \|x\|=1
	\end{align}
	which clearly implies $\alpha a=|\alpha|\,|a|$ and $\beta b=|\beta|\,|b|$. Moreover, we deduce that $\alpha b$ and $\beta a$ have the same sign as $\alpha a \, \beta b\geq 0$ and, therefore, 
	$$
	|x^*(I_3x)|=|\alpha b+ \beta a|=|\alpha|\,|b|+|\beta| \,|a|\geq |\alpha b- \beta a|=|x^*(I_4x)|.
	$$
	To prove $|x^*(I_2x)|\geq|x^*(I_4x)|$ observe that 
	\begin{align*}
	|x^*(I_2x)|&=|\alpha a-\beta b|=\big||\alpha|\,|a|-|\beta|\,|b|\big| \qquad \text{and} \qquad |x^*(I_4x)|=|\alpha b-\beta a|=\big||\alpha|\,|b|-|\beta|\,|a|\big|.
	\end{align*} 
	So, when $|a|=|b|$, it is evident that $|x^*(I_2x)|= |x^*(I_4x)|$. When $|a|\neq |b|$ we need the following claim.
	
	\emph{Claim:} $|a|>|b|$ implies $|\alpha|\geq|\beta|$ and $|b|>|a|$ implies $|\beta|\geq|\alpha|$. 
	
	We only show the first implication, as the second one is analogous. Using the symmetry of the norm and \eqref{eq:duality-equality} we can write
		\begin{align*}
		\|(|\beta|,|\alpha|)\| \, \|(|a|,|b|)\|= \|(|\alpha|,|\beta|)\| \, \|(|a|,|b|)\|= |\alpha|\,|a|+|\beta|\, |b|.
		\end{align*} 
		On the other hand, writing $y^*=(|\beta|,|\alpha|)$ and $y=(|a|,|b|)$,it is clear that
		$$ 
		\|(|\beta|,|\alpha|)\| \, \|(|a|,|b|)\|\geq y^*(y)= |\beta|\,|a|+|\alpha|\,|b|. 
		$$
		Therefore, we get $|\beta|\,|a|+|\alpha|\,|b|\leq |\alpha|\,|a|+|\beta|\, |b| $, and so
		$|\beta|(|a|-|b|)\leq|\alpha|(|a|-|b|)$. Since $|a|>|b|$, it follows that $|\alpha|\geq |\beta|$ and the claim is proved. 
		 
	Let us finish the proof of $|x^*(I_2x)|\geq|x^*(I_4x)|$. If $|a|>|b|$, we get $|\alpha|\geq|\beta|$ by the claim and, moreover, $|\alpha|\,|a|\geq |\alpha|\,|b|\geq |\beta| \, |b|$ and $|\alpha|\,|a|\geq |\beta|\,|a|\geq |\beta| \, |b|$ hold, which clearly imply
	$$
	|x^*(I_2x)|=\big||\alpha|\,|a|-|\beta|\,|b|\big|\geq \big||\alpha|\,|b|-|\beta|\,|a|\big|=|x^*(I_4x)|.
	$$
	The remaining case $|b|>|a|$ is completely analogous.	
\end{proof}

We are ready to state and prove the first main result of the paper.

\begin{theorem}\label{thm:num-index-abs-norms}
	Let $X$ be $\R^2$ endowed with an absolute and symmetric norm. Let $x_0\in S_X$ and $x_0^*\in S_{X^*}$ be such that $|x_0^*(I_4 x_0)|=v(I_4)$ and write $c_j=|x_0^*(I_jx_0)|$ for every $j=1,\ldots,4$. If $c_4=0$, then $n(X)=0$. If otherwise $c_4>0$, then
	$$
	n(X)\geq\min\left\{c_4,\dfrac{2}{1+\frac{1}{c_2}+\frac{1}{c_3}+\frac{1}{c_4}} \right\}.
	$$
	Moreover, if the inequality $c_4\left(1+\frac{1}{c_2}+\frac{1}{c_3}\right)\leq 1$ holds, then
	$$
	n(X)=v(I_4).
	$$
\end{theorem}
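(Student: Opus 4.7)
I would first handle the case $c_4=0$: then $v(I_4)=0$, and the trivial estimate $n(X)\leq v(I_4)/\|I_4\|=0$ forces $n(X)=0$. Assume henceforth $c_4>0$, and fix an arbitrary $T\in\mathcal{L}(X)$ with $\|T\|=1$. Since $\{I_1,I_2,I_3,I_4\}$ is a basis of $\mathcal{L}(X)$, write $T=\sum_{j=1}^{4}a_jI_j$; the triangle inequality gives $\|T\|\leq\sum_j|a_j|$. A direct multiplication shows $I_k^{-1}I_jI_k=\sigma_k(j)\,I_j$, where $\sigma_k\in\{\pm 1\}^4$ runs through the four patterns $(+,+,+,+)$, $(+,+,-,-)$, $(+,-,+,-)$, $(+,-,-,+)$ (a realisation of the Klein four-group), so each conjugate $I_k^{-1}TI_k=\sum_j\sigma_k(j)\,a_jI_j$ has the same numerical radius as $T$.

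The first batch of lower bounds isolates the coefficients: for each $j$ the suitably signed $\tfrac14$-average of the four conjugates equals $a_jI_j$ exactly, so subadditivity of $v$ yields $v(T)\geq|a_j|\,v(I_j)$. Easy choices of supporting pair ($x=x^*=e_1$ for $I_1,I_2$; $x=x^*$ proportional to $(1,1)$ for $I_3$, using that $\|(1,1)\|_X\,\|(1,1)\|_{X^*}=2$, which follows at once from the symmetry of the norm) give $v(I_1)=v(I_2)=v(I_3)=1$, while $v(I_4)=c_4$ by hypothesis. Hence
\begin{equation*}
v(T)\;\geq\;\max\bigl\{|a_1|,\,|a_2|,\,|a_3|,\,c_4|a_4|\bigr\}.
\end{equation*}

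The second batch applies the functional $T'\mapsto x_0^*(T'x_0)$ to each conjugate: setting $d_j:=x_0^*(I_jx_0)$ (so $|d_j|=c_j$, $d_1=1$) and $b_j:=a_jd_j$, I obtain $v(T)\geq A_k:=\bigl|\sum_j\sigma_k(j)\,b_j\bigr|$ for $k=0,1,2,3$. The identity $|u+w|+|u-w|=2\max(|u|,|w|)$, applied first with $(u,w)=(b_1+b_2,\,b_3+b_4)$ and then with $(u,w)=(b_1-b_2,\,b_3-b_4)$, gives $A_0+A_1=2\max(|b_1+b_2|,|b_3+b_4|)$ and $A_2+A_3=2\max(|b_1-b_2|,|b_3-b_4|)$; combining these with $\max(|u+w|,|u-w|)=|u|+|w|$ yields
\begin{equation*}
v(T)\;\geq\;\max\bigl\{|a_1|+c_2|a_2|,\,c_3|a_3|+c_4|a_4|\bigr\},
\end{equation*}
and re-pairing the indices produces two analogous pair bounds for the partitions $\{1,3\}\sqcup\{2,4\}$ and $\{1,4\}\sqcup\{2,3\}$.

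All these estimates are positively $1$-homogeneous in $(|a_1|,\ldots,|a_4|)$, so together with $\|T\|\leq\sum_j|a_j|$ the theorem reduces to minimising the maximum of the seven explicit bounds over the simplex $\{a\in\R_{\geq 0}^{4}:\sum_j a_j=1\}$. Two extremal candidates stand out: the balanced point $a_j=(c_j\sum_k 1/c_k)^{-1}$, where every pair bound equals $2/(1+\tfrac1{c_2}+\tfrac1{c_3}+\tfrac1{c_4})$, and the vertex $a_4=1$, where only the singleton $c_4|a_4|=c_4$ is active. The case analysis showing that the simplex-wide minimum is exactly $\min\{c_4,\,2/(1+\tfrac1{c_2}+\tfrac1{c_3}+\tfrac1{c_4})\}$ is the main obstacle: because each pair bound is a maximum of two sums (rather than the sum itself) and the admissible sign patterns form only a proper subgroup of $\{\pm1\}^4$, one must verify carefully that every direction in coefficient space is controlled by some combination of the available bounds. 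The ``moreover'' statement is then immediate, for the hypothesis $c_4(1+\tfrac1{c_2}+\tfrac1{c_3})\leq 1$ rearranges to $c_4\leq 2/(1+\tfrac1{c_2}+\tfrac1{c_3}+\tfrac1{c_4})$, so the main inequality gives $n(X)\geq c_4=v(I_4)$, while the trivial estimate $n(X)\leq v(I_4)$ (apply the definition of $n(X)$ to $T=I_4$) yields equality.
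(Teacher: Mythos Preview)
Your argument is incomplete at precisely the point you yourself flag as ``the main obstacle'': having produced the singleton bounds $v(T)\geq|a_1|,|a_2|,|a_3|,c_4|a_4|$ and the six pair bounds $v(T)\geq c_i|a_i|+c_j|a_j|$, you identify the two candidate minimisers on the simplex but do not prove that the minimum of the maximum of your bounds is $\min\{c_4,\,2/(1+\tfrac1{c_2}+\tfrac1{c_3}+\tfrac1{c_4})\}$. Saying ``one must verify carefully'' is not a verification; as written, the lower bound for $n(X)$ is not established.

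The paper circumvents this obstacle by extracting a different (and more tractable) family of lower bounds from the same data. From $v(T)\geq|x_0^*(I_k^{-1}TI_kx_0)|=\bigl|\sum_j\sigma_k(j)a_jd_j\bigr|$, the paper observes that, since the eight sign patterns $\pm\sigma_k$ realise every $(\epsilon_1,\epsilon_2,\epsilon_3,\epsilon_4)\in\{\pm1\}^4$ with $\epsilon_1\epsilon_2\epsilon_3\epsilon_4=1$, for each fixed $j$ one may choose the pattern making $\epsilon_i a_id_i=|a_i|c_i$ for every $i\neq j$; the remaining term contributes at worst $-|a_j|c_j$, giving
\[
v(T)\;\geq\;\sum_{k\neq j}|a_k|c_k-|a_j|c_j\qquad(j=1,\dots,4).
\]
With $\alpha_k=|a_k|/\sum_i|a_i|$, the problem becomes the linear programme of minimising $z$ over the polytope $\{\sum\alpha_k=1,\ \alpha_k\geq0,\ z\geq\sum_{k\neq j}\alpha_kc_k-\alpha_jc_j\}$, and a short extreme-point analysis (either some $\alpha_{j_0}=0$, in which case $z\geq c_4$ by Lemma~\ref{lemma-c4}, or all four inequality constraints are tight, forcing $\alpha_jc_j$ constant and $z=2/(1+\tfrac1{c_2}+\tfrac1{c_3}+\tfrac1{c_4})$) finishes the proof cleanly. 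Your pair-and-singleton system may well yield the same minimum, but the ``three-minus-one'' bounds make the optimisation immediate; this is the step your argument lacks.
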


\begin{proof}
	Observe first that $n(X)\leq v(I_4)$ since $\|I_4\|=1$. So $n(X)=0$ holds when $c_4=0$. Thus we assume that $c_4>0$ which, by Lemma~\ref{lemma-c4}, implies $c_j>0$ for $j=2,3$.
	
Fixed a non-zero operator $T\in \mathcal{L}(X)$ our aim is to estimate $\frac{v(T)}{\|T\|}$. To do so, observe that 
there exist $A_j\in \R$ for $j=1,\ldots,4$ satisfying $T=\sum_{k=1}^{4} A_k I_k$, as the onto isometries $I_1,\ldots,I_4$ form a basis of $\mathcal{L}(X)$. Observe next that 
\begin{align*}
I_1^{-1} T I_1&=A_1I_1+A_2I_2+A_3I_3+A_4I_4\\
I_2^{-1} T I_2&=A_1I_1+A_2I_2-A_3I_3-A_4I_4\\
I_3^{-1} T I_3&=A_1I_1-A_2I_2+A_3I_3-A_4I_4\\
I_4^{-1} T I_4&=A_1I_1-A_2I_2-A_3I_3+A_4I_4\\
\end{align*}
so, using that $v(T)=v(\pm I_j^{-1}TI_j)$ for every $j=1,\ldots,4$, we can write
	\begin{align*}
	v(T)=\max\left\{\right. &\left|\pm v(I_j^{-1} T I_j)\right|\colon j=1,\ldots,4 \left. \right\} \\
	\geq \max\left\{\right. &\left|\pm x_0^*(I_j^{-1} T I_j x_0)\right|\colon j=1,\ldots,4 \left. \right\} \\
	= \max \left\{\right.&\left|\pm (A_1x_0^*(I_1x_0)+A_2x_0^*(I_2x_0)+A_3x_0^*(I_3x_0)+A_4x_0^*(I_4x_0))\right|, \\
	&\left|\pm (A_1x_0^*(I_1x_0)+A_2x_0^*(I_2x_0)-A_3x_0^*(I_3x_0)-A_4x_0^*(I_4x_0))\right|, \\
	&\left|\pm (A_1x_0^*(I_1x_0)-A_2x_0^*(I_2x_0)+A_3x_0^*(I_3x_0)-A_4x_0^*(I_4x_0))\right|, \\
	&\left|\pm (A_1x_0^*(I_1x_0)-A_2x_0^*(I_2x_0)-A_3x_0^*(I_3x_0)+A_4x_0^*(I_4x_0))\right|\left.\right\}.
	\end{align*}
The combination of signs in the last expression allows us to deduce
$$
v(T) \geq \max\left\{\sum_{\substack{k=1\\k\neq j}}^{4}|A_k|c_k-|A_j|c_j\colon j=1,\ldots,4 \right\}.
$$
Now, writing $\|T\|_+=\sum_{k=1}^4|A_k|$, we get $\|T\|=\left\|\sum_{k=1}^4 A_kI_k\right\|\leq \|T\|_+$. Besides, calling $\alpha_j=\frac{|A_j|}{\|T\|_+}$ for $j=1, \ldots,4$, we can estimate $n(X)$ as follows: 
	\begin{align*}
	n(X)&=\inf \left\{\dfrac{v(T)}{\|T\|}\colon T\in \mathcal{L}(X), \ T\neq 0\right\} \geq \inf\left\{\dfrac{v(T)}{\|T\|_+}\colon T\in \mathcal{L}(X), \ T\neq 0\right\} \\ &\geq \min_{\substack{\alpha_1+\alpha_2+\alpha_3+\alpha_4=1\\\alpha_j\geq0}} \max \left\{\sum_{\substack{k=1\\k\neq j}}^4\alpha_k c_k -\alpha_j c_j\colon j=1,\ldots,4 \right\}.
	\end{align*} 
So, defining the function  
	$$
	f(\alpha_1,\alpha_2,\alpha_3,\alpha_4)=\max\left\{\sum_{\substack{k=1\\k\neq j}}^4\alpha_k c_k -\alpha_j c_j\colon j=1,\ldots,4 \right\}  \qquad \big((\alpha_1,\alpha_2,\alpha_3,\alpha_4)\in\R^4\big)
	$$ 
	and the compact set 
	$$
	K=\left\{(\alpha_1,\alpha_2,\alpha_3,\alpha_4)\in\R^4 \colon \sum_{k=1}^4\alpha_k=1, \alpha_j\geq0, j=1,\ldots,4 \right\},
	$$ 
	we have that 
	\begin{equation*}
	n(X)\geq \underset{K}{\min} f.
	\end{equation*}
	Our goal now is to compute this minimum. As $f$ is the maximum of linear functions, following a typical strategy of linear programming, we can transform this minimization problem into a linear optimization one: we have to minimize the function 
	$$
	g(\alpha_1,\alpha_2,\alpha_3,\alpha_4,z)=z \qquad \big((\alpha_1,\alpha_2,\alpha_3,\alpha_4,z)\in \R^5\big)
	$$ 
	on the compact convex set 
	$$
	K'=\left\{(\alpha_1,\alpha_2,\alpha_3,\alpha_4,z)\in\R^5 \colon \sum_{k=1}^4\alpha_k=1, z\leq2, \alpha_j\geq0,   z\geq\sum_{\substack{k=1\\k\neq j}}^4\alpha_k c_k -\alpha_j c_j, j=1,\ldots,4 \right\}.
	$$
	In fact, it is easy to check that
	$$
	\underset{K}{\min}\, f=\underset{K'}{\min}\, g.
	$$ 
	Indeed, if $(\alpha_1,\alpha_2,\alpha_3,\alpha_4)\in K$ is such that $\underset{K}{\min}\, f=f(\alpha_1,\alpha_2,\alpha_3,\alpha_4)$, then we clearly have that 
	$$
	\big(\alpha_1,\alpha_2,\alpha_3,\alpha_4, f(\alpha_1,\alpha_2,\alpha_3,\alpha_4)\big)\in K' \qquad \text{and} \qquad  g\big(\alpha_1,\alpha_2,\alpha_3,\alpha_4, f(\alpha_1,\alpha_2,\alpha_3,\alpha_4)\big)= f(\alpha_1,\alpha_2,\alpha_3,\alpha_4).
	$$
	Therefore, we have $\underset{K}{\min}\, f\geq \underset{K'}{\min}\, g$. To prove the reverse inequality take $(\alpha_1,\alpha_2,\alpha_3,\alpha_4,z)\in K'$ satisfying $\underset{K'}{\min}\, g=g(\alpha_1,\alpha_2,\alpha_3,\alpha_4,z)=z$ and observe that $(\alpha_1,\alpha_2,\alpha_3,\alpha_4)\in K$ and $f(\alpha_1,\alpha_2,\alpha_3,\alpha_4)\leq z$. So we get $\underset{K}{\min}\, f\leq \underset{K'}{\min}\, g$.
	
	To finish the proof we just have to compute $\underset{K'}{\min}\, g$. Since $K'$ is a compact convex set, the linear function $g$ attains its minimum on $K'$ at an extreme point of $K'$. Fixed $(\alpha_1,\alpha_2,\alpha_3,\alpha_4,z)\in \ext(K')$, as $K'\subset \R^5$, it must happen that at least five of the ten restrictions that define $K'$ become equalities. We calculate $g(\alpha_1,\alpha_2,\alpha_3,\alpha_4,z)$ depending on which equalities occur. If there exists $j_0\in\{1,\ldots,4\}$ such that $\alpha_{j_0}=0$, then
	$$
	g(\alpha_1,\alpha_2,\alpha_3,\alpha_4,z)=z\geq \sum_{\substack{k=1\\k\neq j_0}}\alpha_k c_k\geq c_4\sum_{\substack{k=1\\k\neq j_0}}\alpha_k=c_4,
	$$
	where we have used that $c_j\geq c_4$ for every $j\in\{1,2,3\}$ by Lemma~\ref{lemma-c4}.
	
	If otherwise $\alpha_j>0$ for every $j\in\{1,\ldots,4\}$, we have that $z=\sum_{\substack{k=1\\k\neq j}}^4\alpha_k c_k -\alpha_j c_j$ for every $j\in\{1,\ldots,4\}$, as $z<2$ whenever $z=\sum_{\substack{k=1\\k\neq j}}^4\alpha_k c_k -\alpha_j c_j$ for any $j$. Hence
	$$\sum_{\substack{k=2}}^4\alpha_k c_k -\alpha_1 c_1=\sum_{\substack{k=1\\k\neq 2}}^4\alpha_k c_k -\alpha_2 c_2=\sum_{\substack{k=1\\k\neq 3}}^4\alpha_k c_k -\alpha_3 c_3=\sum_{k=1}^3\alpha_k c_k -\alpha_4 c_4,$$
	and so $\alpha_1 c_1=\alpha_2 c_2=\alpha_3 c_3=\alpha_4 c_4$. Since $c_1=1$, we get
	$$ \alpha_2=\dfrac{\alpha_1}{c_2}, \quad \alpha_3=\dfrac{\alpha_1}{c_3}, \quad \alpha_4=\dfrac{\alpha_1}{c_4} $$ and it follows from $\alpha_1+\alpha_2+\alpha_3+\alpha_4=1$ that 
	$$
	\alpha_1=\dfrac{1}{1+\frac{1}{c_2}+\frac{1}{c_3}+\frac{1}{c_4}}.
	$$
	Therefore,
	$$
	g(\alpha_1,\alpha_2,\alpha_3,\alpha_4,z)=z=2\alpha_1=\dfrac{2}{1+\frac{1}{c_2}+\frac{1}{c_3}+\frac{1}{c_4}}.
	$$
	So, for every $(\alpha_1,\alpha_2,\alpha_3,\alpha_4,z)\in \ext(K')$ we have shown that either 
	$g(\alpha_1,\alpha_2,\alpha_3,\alpha_4,z)\geq \dfrac{2}{1+\frac{1}{c_2}+\frac{1}{c_3}+\frac{1}{c_4}}$ or $g(\alpha_1,\alpha_2,\alpha_3,\alpha_4,z)\geq c_4$. Thus, we can write
	$$
	n(X)\geq \min_{K'} g\geq \min\left\{c_4,\dfrac{2}{1+\frac{1}{c_2}+\frac{1}{c_3}+\frac{1}{c_4}}\right\}
	$$
	which finishes the first part of the proof. Finally, to prove the moreover part, it suffices to observe that if $c_4\left(1+\frac{1}{c_2}+\frac{1}{c_3}\right)\leq 1$ then $c_4\leq \dfrac{2}{1+\frac{1}{c_2}+\frac{1}{c_3}+\frac{1}{c_4}}$ and hence we get $n(X)=c_4=v(I_4)$.
\end{proof}

Using the preceding result we can obtain the numerical index of two-dimensional $L_p$-spaces for some values of $p$.
In order to use Theorem~\ref{thm:num-index-abs-norms} we need to find one pair $x\in S_{\ell_p^2}$, $x^*\in S_{\ell_q^2}$ satisfying $x^*(x)=1$, at which $I_4$ attains its numerical radius. However, this seems to be a rather tricky problem for arbitrary $p$. We can avoid this by showing that  condition $c_4\left(1+\frac{1}{c_2}+\frac{1}{c_3}\right)\leq 1$ in the statement of Theorem~\ref{thm:num-index-abs-norms} holds not only for a particular choice of $x\in S_{\ell_p^2}$, $x^*\in S_{\ell_q^*}$ satisfying $x^*(x)=1$ but for all of them.

\begin{theorem}
	Let $p\in\left[\frac{3}{2},3\right]$. Then 
	$$
	n(\ell_p^2)=M_p=\sup_{t\in[0,1]} \dfrac{|t^{p-1}-t|}{1+t^p}.
	$$
\end{theorem}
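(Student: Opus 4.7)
The upper bound $n(\ell_p^2) \leq M_p$ is already in item~(d) of the introduction, so the task reduces to proving $n(\ell_p^2) \geq M_p$. My plan is to apply the ``moreover'' clause of Theorem~\ref{thm:num-index-abs-norms}: it suffices to exhibit a pair $(x_0, x_0^*) \in S_{\ell_p^2} \times S_{\ell_q^2}$ with $x_0^*(x_0) = 1$ which attains $|x_0^*(I_4 x_0)| = v(I_4) = M_p$ and satisfies $c_4\bigl(1 + \tfrac{1}{c_2} + \tfrac{1}{c_3}\bigr) \leq 1$. Since pinpointing the attaining pair for arbitrary $p$ looks unpleasant, I follow the cue given just before the theorem and verify the inequality at \emph{every} admissible pair, compactness in finite dimensions guaranteeing that at least one attaining pair exists.

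Parametrising the admissible pairs is straightforward thanks to the smoothness of $\ell_p^2$: for $x = (a,b) \in S_{\ell_p^2}$ the unique dual functional with $x^*(x) = 1$ is $x^* = (|a|^{p-1}\operatorname{sgn} a,\, |b|^{p-1}\operatorname{sgn} b)$. A direct check shows that replacing $(x, x^*)$ by $(I_j x,\, x^* \circ I_j^{-1})$ only permutes $c_1, c_2, c_3, c_4$, so I may assume $a \geq b \geq 0$ and $a^p + b^p = 1$. Writing $t = b/a \in [0,1]$, a short computation gives
$$
c_2 = \frac{1 - t^p}{1 + t^p}, \qquad c_3 = \frac{t + t^{p-1}}{1 + t^p}, \qquad c_4 = \frac{|t - t^{p-1}|}{1 + t^p},
$$
and after simplification the condition $c_4\bigl(1 + \tfrac{1}{c_2} + \tfrac{1}{c_3}\bigr) \leq 1$ collapses to the single-variable inequality
$$
\varphi_p(t) \, := \, \frac{2\,|t - t^{p-1}|}{1 - t^{2p}} \,+\, \frac{|t - t^{p-1}|}{t + t^{p-1}} \,\leq\, 1 \qquad \bigl(t \in [0,1)\bigr).
$$

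Reflexivity gives $n(\ell_p^2) = n(\ell_q^2)$ and $M_p = M_q$, so it is enough to handle $p \in [2,3]$, on which $|t - t^{p-1}| = t - t^{p-1}$. The boundary values are easily verified: $\varphi_p(0^+) = 1$ (the first summand vanishes and the second equals $(1 - t^{p-2})/(1 + t^{p-2}) \to 1$ for $p > 2$), and $\varphi_p(1^-) = (p-2)/p \leq 1$. The main obstacle is then to establish $\varphi_p(t) \leq 1$ throughout the interior of $[0,1)$ uniformly in $p \in [2, 3]$. My plan is to put $1 - \varphi_p(t)$ over the common denominator $(1 - t^{2p})(t + t^{p-1})$, pull out the non-negative factor $t - t^{p-1} = t(1 - t^{p-2})$ from the numerator, and show that what remains is non-negative by using the constraints $p \geq 2$ and $p \leq 3$ at the right places. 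The delicate point is that the inequality is \emph{tight} at $t = 0$ for every $p \in (2, 3]$, so no crude estimate can succeed; the argument has to respect the high-order vanishing of $1 - \varphi_p$ at $t = 0$, which for $p = 3$ takes the concrete form $1 - \varphi_3(t) = 2t^3(1 + t^2)/\bigl[(1+t)(1 + t^2 + t^4)\bigr]$.
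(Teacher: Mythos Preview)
Your route is the paper's: reduce by duality to one half of $[3/2,3]$, parametrize the norming pairs by $t\in(0,1)$, compute $c_2,c_3,c_4$, and verify $c_4\bigl(1+\tfrac{1}{c_2}+\tfrac{1}{c_3}\bigr)\le 1$ at \emph{every} such pair so that the ``moreover'' clause of Theorem~\ref{thm:num-index-abs-norms} yields $n(\ell_p^2)=v(I_4)=M_p$. The only cosmetic difference is that the paper works on $p\in[3/2,2)$ while you work on the dual range $p\in[2,3]$.

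The gap is that you stop at a plan for the final inequality and that plan is slightly off. Over the common denominator the numerator of $1-\varphi_p(t)$ simplifies to
\[
2\bigl[t^{p-1}-t^{2}+t^{2p-2}-t^{3p-1}\bigr],
\]
and $t-t^{p-1}$ is not an algebraic factor of this expression for general $p$, so ``pulling it out'' is not the right move. The detour is in any case unnecessary: the bracket rewrites directly as
\[
t^{p-1}\bigl(1-t^{3-p}\bigr)+t^{2p-2}\bigl(1-t^{p+1}\bigr),
\]
which is manifestly nonnegative for $t\in(0,1)$ once $3-p\ge 0$, i.e.\ $p\le 3$. This single line finishes the proof, and the tightness at $t=0$ that you worry about is automatically respected by this splitting. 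The paper's version on the dual side $p\in[3/2,2)$ is the equivalent rearrangement
\[
0\le t\bigl(1-t^{2p-3}\bigr)+t^2\bigl(1-t^{2p-1}\bigr),
\]
valid because $2p-3\ge 0$.
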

\begin{proof}
	It is known that $n(\ell_2^2)=0$. Besides, for $p\in ]2,3]$ we get $q\in [3/2, 2[$ so, using the fact that $n(\ell_p^2)=n(\ell_q^2)$, the result will be proved if we compute $n(\ell_p^2)$ for $p\in [3/2, 2[$. So we fix $p\in [3/2, 2[$ and we use the parametrization of the duality mapping for absolute norms on $\R^2$ given in \cite[Lemma~3.2]{D-Mc-P-W}. Indeed, for $t\in [0,1]$ consider 
	$$
	x_t=\frac{1}{(1+t^p)^{1/p}}(1,t)\qquad  \text{and} \qquad x^*_t=\frac{1}{(1+t^p)^{\frac{p-1}{p}}}(1,t^{p-1})
	$$ 
	which satisfy $x_t\in S_{\ell_p^2}$, $x_t^*\in S_{\ell_q^2}$, and  $x_t^*(x_t)=1$. We next define the functions 
	\begin{align*}
	c_1(t)&=x_t^*(I_1x_t)=1, \quad &c_2(t)&=x_t^*(I_2x_t)=\dfrac{1-t^p}{1+t^p}\,,& \\
	c_3(t)&=x_t^*(I_3x_t)=\dfrac{t^{p-1}+t}{1+t^p}, \quad &c_4(t)&=x_t^*(I_4x_t)=\dfrac{t^{p-1}-t}{1+t^p} &\qquad \big(t\in [0,1]\big).
	\end{align*}
	Since the maximum defining $v(I_4)=\max_{t\in[0,1]} \frac{t^{p-1}-t}{1+t^p}$ is obviously attained at some $t_0\in]0,1[$, if we show that $c_4(t)\left(1+\frac{1}{c_2(t)}+\frac{1}{c_3(t)}\right)\leq 1$ for every $t\in]0,1[$, then we will have $n(\ell_p^2)=v(I_4)=M_p$ by Theorem~\ref{thm:num-index-abs-norms}. So, for fixed $t\in]0,1[$, observe that
	\begin{align*}
	c_4(t)\left(1+\dfrac{1}{c_2(t)}+\dfrac{1}{c_3(t)}\right)&=\dfrac{t^{p-1}-t}{1+t^p}\left(1+\dfrac{1+t^p}{1-t^p}+\dfrac{1+t^p}{t^{p-1}+t}\right)=\dfrac{2t^{p-1}-2t}{1-t^{2p}}+\dfrac{t^{p-1}-t}{t^{p-1}+t}
	\end{align*}
	and, therefore
	\begin{align*}
	c_4(t)\left(1+\dfrac{1}{c_2(t)}+\dfrac{1}{c_3(t)}\right)\leq 1&\Longleftrightarrow \dfrac{2t^{p-1}-2t}{1-t^{2p}}+\dfrac{t^{p-1}-t}{t^{p-1}+t}\leq 1 \\ 
	&\Longleftrightarrow \dfrac{2(t^{p-1}-t)}{1-t^{2p}}\leq \dfrac{2t}{t^{p-1}+t}\\ 
	&\Longleftrightarrow 0\leq t-t^{2p-2}+t^2-t^{2p+1}\Longleftrightarrow 0\leq t(1-t^{2p-3})+t^2(1-t^{2p-1}).
	\end{align*}
	Since the last inequality holds for $3/2\leq p<2$ and $t\in]0,1[$, Theorem~\ref{thm:num-index-abs-norms} applies and finishes the proof.
\end{proof}


\begin{thebibliography}{99}
	

	\bibitem{B-D1} \textsc{F.~F.~Bonsall, and J.~Duncan},
	\emph{Numerical Ranges of Operators on Normed Spaces and of
		Elements of Normed Algebras}, London Math. Soc. Lecture Note
	Series \textbf{2}, Cambridge 1971.
	
	\bibitem{B-D2} \textsc{F.~F.~Bonsall, and J.~Duncan},
	\emph{Numerical Ranges II}, London Math. Soc. Lecture Note Series
	\textbf{10}, Cambridge 1973.
	
		
	\bibitem{D-Mc-P-W} \textsc{J.~Duncan, C.~M.~McGregor, J.~D.~Pryce and
		A.~J.~White}, The numerical index of a normed space, \emph{J.
		London Math. Soc.}, \textbf{2} (1970), 481--488.
	
	\bibitem{Eddari} \textsc{E.~Ed-dari}, On the numerical index of
	Banach spaces, \emph{Linear Algebra Appl.} \textbf{403}
	(2005), 86--96.

	\bibitem{Eddari2} \textsc{E.~Ed-dari and M.~Khamsi}, The numerical
	index of the $L_{p}$ space, \emph{Proc. Amer. Math. Soc.}
	\textbf{134} (2006), 2019--2025.

	\bibitem{Eddari3} \textsc{E.~Ed-dari, M.~Khamsi, and A.~Aksoy},
	On the numerical index of vector-valued function spaces,
	\emph{Linear Mult. Algebra} \textbf{55} (2007), 507--513.

	\bibitem{KaMaPa} \textsc{V.~Kadets, M.~Mart\'{\i}n, and R.~Pay\'{a}},
	Recent progress and open questions on the numerical index of
	Banach spaces, \emph{Rev. R. Acad. Cien. Serie A. Mat.}
	\textbf{100} (2006), 155--182.

	\bibitem{MartinMeri-LAMA} \textsc{M.~Mart\'{\i}n and J.~Mer\'{\i}}, Numerical index of some polyhedral norms
	on the plane, \emph{Linear Mult. Algebra}, \textbf{55} (2007), 175--190.

	\bibitem{MarMer-LP} \textsc{M.~Mart\'{\i}n and J.~Mer\'{\i}},
	A note on the numerical index of the $L_p$ space of dimension two,
	\emph{Linear Mult. Algebra} \textbf{57} (2009), 201--204.
	
	\bibitem{MMP-Israel} \textsc{M.~Mart\'{\i}n, J.~Mer\'{\i}, and M.~Popov}, On the numerical index of real $L_p(\mu)$-spaces, \emph{Israel J. Math.} \textbf{184} (2011), 183--192.

	\bibitem{Tuy} \textsc{H.~Tuy},
	\emph{Convex Analysis and Global Optimization}, Kluwer Academic
	Publishers, Dordrecht, 1998.
	
\end{thebibliography}
\end{document}